\newtheorem{The}{Theorem}[section]
\newtheorem{Def}{Definition}[section]
\newtheorem{Ex}{Example}[section]
\newtheorem{remark}{Remark}[section]
\begin{document}
\begin{center}
{\LARGE {\bf Can we split fractional derivative while analyzing fractional differential equations? }}
\vskip 1cm
{\Large  Sachin Bhalekar, Madhuri Patil}\\
\textit{Department of Mathematics, Shivaji University, Kolhapur - 416004, India, Email:sachin.math@yahoo.co.in, sbb\_maths@unishivaji.ac.in (Sachin Bhalekar), madhuripatil4246@gmail.com (Madhuri Patil)}\\
\end{center}
\begin{abstract}
 Fractional derivatives are generalization to classical integer-order derivatives. The rules which are true for classical derivative need not hold for the fractional derivatives, for example, we cannot simply add the fractional orders $\alpha$ and $\beta$ in ${}_0^{C}\mathrm{D}_t^\alpha {}_0^{C}\mathrm{D}_t^\beta$ to produce the fractional derivative ${}_0^{C}\mathrm{D}_t^{\alpha+\beta}$ of order $\alpha+\beta$, in general. In this article we discuss the details of such compositions and propose the conditions to split a linear fractional differential equation into the systems involving lower order derivatives. Further, we provide some examples, which show that the related results in the literature are sufficient but not necessary conditions.
\end{abstract}
\vskip 1cm
\noindent
{\bf Keywords}: Fractional derivative, Mittag-Leffler functions, Composition rule, splitting of fractional derivative.
\section{Introduction}
Fractional Calculus (FC) is a popular branch of Mathematics which has attracted the researchers working in various fields of Science, Engineering and Social Sciences\cite{Podlubny, Mainardi, Magin}. The ability of the fractional derivatives (FDs) to model memory properties in the real-life models is a key to the applicability of fractional differential equations (FDEs). 
\par Applications of FC in viscoelasticity are given in \cite{Koeller,Shimizu,Craiem}. In \cite{Kulish}, Kulish and Lage presented the application of FC to the solution of time-dependent, viscous-diffusion fluid mechanics problems. Fellah et al. \cite{Fellah} used FC to model the sound waves propagation in rigid
porous materials. In \cite{Magin}, Magin described the applications of FC to solve biomedical problems. Sebaa et al. \cite{Sebaa} used FC to describe the viscous interactions between fluid and solid structure in cancellous bone. Fractional derivatives are widely used in control theory \cite{Matignon,Matusu,Baleanu}. In the book \cite{Fallahgoul}, Fallahgoul et al. discussed how FC and fractional processes are used in financial modeling, finance and economics. In \cite{Goulart}, Goulart et al. proposed two fractional differential equation models for the spatial distribution of concentration of a non-reactive pollutants in planetary boundary layer.  
\par The FDs are so flexible that the order can be chosen not only from the set of positive integers but also from real and complex number sets \cite{Love,Neamaty}. Surprisingly, the order of FD can also be a function of time \cite{Samko-1995,Valerio} or may distributed on some interval \cite{Caputo, Bagley, Refahi}.
\par However, one has to be careful while using FDE models. Due to the generalization, the FD becomes nonlocal unlike classical derivative. Hence, the properties and rules which are trivial for classical derivatives (e.g. chain rule, Leibniz rule) become complicated with FDs. The FDE models may also behave weirdly. e.g. The trajectories of autonomous planar systems involving classical derivative cannot have singular points but the fractional order counterparts of the same model can have self-intersecting trajectories, cusps etc. \cite{S. Bhalekar}.
\par For any positive integers $m$ and $n$, we have
$$\frac{d^n}{dx^n}\frac{d^m}{dx^m}(\cdot)=\frac{d^m}{dx^m}\frac{d^n}{dx^n}(\cdot)=\frac{d^{m+n}}{dx^{m+n}}(\cdot).$$
On the other hand, if we replace the integers $m$ and $n$ by arbitrary numbers then the resulting FDs need not hold such composition rule. Ordinary differential equation  
$
a_0x(t)+a_1\frac{d}{dt}x(t)+a_2\frac{d^2}{dt^2}x(t)+\dots+ a_n\frac{d^n}{dt^n}x(t)=0, 
$
of higher order can be splitted into a system
\begin{equation*}
\begin{split}
x(t)& = y_0(t),\\
\frac{d}{dt}y_j(t)& = y_{j+1}(t),\, j=0,1,\dots,n-2,\\
\frac{d}{dt}y_{n-1}(t)& = \frac{-1}{a_n}(a_0y_0(t)+a_1y_1(t)+a_2y_2(t)+\dots+a_{n-1}y_{n-1}(t))
\end{split}
\end{equation*} 
containing lower order derivatives. This is not the case with FDEs, in general. 
\par In this article we propose the results regarding such compositions of FDs and splitting of FDEs. 
\section{Preliminaries}
This section deals with basic definitions and results given in the literature \cite{Podlubny,Das,  Samko, Erdelyi, Diethelm}. Throughout this section, we take $n\in\mathbb{N}$.
\begin{Def} \label{Def 2.1}
Let $\alpha\ge0$ \,\, ($\alpha\in\mathbb{R}$). Then Riemann-Liouville (\text RL) fractional integral of function $f\in C[a,b]$, $t>a$ of order `$\alpha$' is defined as,
\begin{equation}
{}_a\mathrm{I}_t^\alpha f(t)=
\frac{1}{\Gamma{(\alpha)}}\int_a^t (t-\tau)^{\alpha-1}f(\tau)\,\mathrm{d}\tau. \label{2.1}
\end{equation}
\end{Def}
\begin{Def}\label{Def 2.2}
The Caputo fractional derivative of order $\alpha>0$, $n-1<\alpha< n$, $n\in \mathbb{N}$ is defined for $f\in C^n[a,b]$,\, $t>a$ as,
\begin{equation}
{}_a^{C}\mathrm{D}_t^\alpha f(t)=
\begin{cases}
\frac{1}{\Gamma{(n-\alpha)}}\int_a^t (t-\tau)^{n-\alpha-1}f^{(n)}(\tau)\,\mathrm{d}\tau & \mathrm{if}\,\, n-1<\alpha< n\\
\frac{d^n}{dt^n}f(t) & \mathrm{if}\,\, \alpha=n.
\end{cases}\label{2.2}
\end{equation}
Note that ${}_0^{C}\mathrm{D}_t^\alpha c=0$, where $c$ is a constant.
\end{Def}
\begin{Def} \label{Def 2.3}
The one-parameter Mittag-Leffler function is defined as,
\begin{equation}
E_\alpha(z)=\sum_{k=0}^\infty \frac{z^k}{\Gamma(\alpha k+1)}\, ,\qquad z\in\mathbb{C} \,(\alpha>0).\label{2.3}
\end{equation}
The two-parameter Mittag-Leffler function is defined as,
\begin{equation}
E_{\alpha,\beta}(z)=\sum_{k=0}^\infty \frac{z^k}{\Gamma(\alpha k+\beta)}\, ,\qquad z\in\mathbb{C} \,(\alpha>0,\,\beta>0).\label{2.4}
\end{equation}
\end{Def}
\begin{Def} \cite{Podlubny} \label{Def 2.4}
The multi-parameter Mittag-Leffler function is defined as,
\begin{equation}
E_{(a_1,a_2,\dots,a_n),b}(z_1,\dots,z_n)=\sum_{k=0}^\infty\sum_{\substack{l_1+\dots+l_n=k\\l_1\ge0,\dots,l_n\ge0}}(k;l_1,\dots,l_n) \frac{\prod_{i=1}^{n}z_i^{l_i}}{\Gamma(b+\sum_{i=1}^{n}a_il_i)} \label{2.5}
\end{equation}
where, $(k;l_1,\dots,l_n)=\frac{k!}{l_1!l_2!\cdots l_n!}$ is a multinomial coefficient.
\end{Def}
\begin{Def}\cite{Gorenflo} \label{Def 2.5}
The Prabhakar generalized Mittag-Leffler function is defined as,
\begin{equation}
E_{\alpha,\beta}^\gamma(z)=\sum_{k=0}^\infty \frac{(\gamma)_kz^k}{\Gamma(\alpha k+\beta)k!}\, ,\qquad z\in \mathbb{C}\quad (Re(\alpha)>0,\,Re(\beta)>0,\,\gamma>0).\label{2.6}
\end{equation}
where $(\gamma)_k=\frac{\Gamma(\gamma+k)}{\Gamma(\gamma)}=\gamma(\gamma+1)\cdots(\gamma+k-1)$.
\end{Def}
\noindent{\bf properties:}\cite{Podlubny} 
If $\mathcal{L}\left\{f(t);s\right\}=F(s)$ is the Laplace transform of function $f$ then 
\begin{equation}
\mathcal{L}\left\{{}_0^C\mathrm{D}_t^\alpha f(t);s\right\}=s^\alpha F(s)-\sum_{k=0}^{n-1} s^{\alpha-k-1} f^{(k)}(0),\qquad(n-1<\alpha\le n). \qquad \label{2.7}
\end{equation}
\\
{\bf Note that}\\
(i) Let $n-1< \alpha \le n$, and $\beta\ge0$\\
\quad ${}_0^{C}\mathrm{D}_t^\alpha t^\beta=
\begin{cases}
\frac{\Gamma(\beta+1)}{\Gamma(-\alpha+\beta+1)}t^{\beta-\alpha} ,\, \mathrm{if}\, \beta>n-1, \, \beta\in\mathbb{R} \\
0\qquad \qquad \quad \quad\, ,\, \mathrm{if}\, \beta\in \{0,1,2,\dots,n-1\}.
\end{cases}$\\
(ii) ${}_0^{C}\mathrm{D}_t^\alpha {}_0\mathrm{I}_t^\beta f(t)=
\begin{cases} 
{}_0\mathrm{I}_t^{\beta-\alpha }f(t), \quad \mathrm{if} \,   \beta>\alpha.\\
f(t) , \qquad \quad \,\,\,  \mathrm{if} \, \beta=\alpha.\\
{}_0^{C}\mathrm{D}_t^{\alpha-\beta} f(t), \quad \mathrm{if} \, \alpha> \beta.
\end{cases}$\\
\begin{The} \label{Thm 2.1}
\cite{Luchko} Solution of homogeneous fractional order differential equation
\begin{equation}
{}_0^C\mathrm{D}_t^\alpha x(t)+\lambda x(t)=0, \qquad 0<\alpha<1 \label{2.8}
\end{equation} 
is given by,
\begin{equation}
x(t)=x(0)E_\alpha(-\lambda t^\alpha).\label{2.9}
\end{equation}
\begin{The}\label{Thm 2.2} \cite{Podlubny}                                                                         
Let $f(t)$ is continuous for $t\ge 0$ then 
$${}_0\mathrm{I}_t^\alpha {}_0\mathrm{I}_t^\beta f(t)={}_0\mathrm{I}_t^\beta {}_0\mathrm{I}_t^\alpha f(t)= {}_0\mathrm{I}_t^{\alpha+\beta} f(t)$$  for any $\alpha>0$ , $\beta>0$.
\end{The}
\begin{The}\cite{Podlubny}\label{Thm 2.3}
Let $n-1<\alpha<n$,\, $n\in\mathbb{N}$ and $m\in\mathbb{N}$. If $f\in C^{m+n}[a,b]$ then
$${}_0^{C}\mathrm{D}_t^\alpha {}_0^{C}\mathrm{D}_t^mf(t)={}_0^{C}\mathrm{D}_t^{\alpha+m}f(t)\ne {}_0^{C}\mathrm{D}_t^m {}_0^{C}\mathrm{D}_t^\alpha f(t).$$ 
\end{The}
\end{The}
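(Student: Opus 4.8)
The plan is to solve the initial value problem \eqref{2.8} by the Laplace transform, exploiting the transform rule \eqref{2.7} for the Caputo derivative that is already recorded in the Preliminaries. Since $0<\alpha<1$ we take $n=1$ in \eqref{2.7}, so that $\mathcal{L}\{{}_0^C\mathrm{D}_t^\alpha x(t);s\}=s^\alpha X(s)-s^{\alpha-1}x(0)$, where $X(s)=\mathcal{L}\{x(t);s\}$. This converts the nonlocal equation into an algebraic one in the transform variable.

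Applying $\mathcal{L}\{\cdot;s\}$ to both sides of \eqref{2.8} and using linearity gives $s^\alpha X(s)-s^{\alpha-1}x(0)+\lambda X(s)=0$, whence
$$X(s)=x(0)\,\frac{s^{\alpha-1}}{s^\alpha+\lambda}.$$
It then remains to recognise the right-hand side as the transform of a known function and to invert it.

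To identify the inverse I would compute the Laplace transform of $E_\alpha(-\lambda t^\alpha)$ directly from the series definition \eqref{2.3}. Writing $E_\alpha(-\lambda t^\alpha)=\sum_{k=0}^{\infty}\frac{(-\lambda)^k t^{\alpha k}}{\Gamma(\alpha k+1)}$ and transforming term by term using $\mathcal{L}\{t^{\alpha k};s\}=\Gamma(\alpha k+1)/s^{\alpha k+1}$, the gamma factors cancel and the sum reduces to a geometric series,
$$\mathcal{L}\{E_\alpha(-\lambda t^\alpha);s\}=\frac{1}{s}\sum_{k=0}^{\infty}\left(\frac{-\lambda}{s^\alpha}\right)^{k}=\frac{s^{\alpha-1}}{s^\alpha+\lambda},$$
valid for $\operatorname{Re}(s)$ large enough that $|\lambda|/|s|^\alpha<1$. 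Comparing this with the expression for $X(s)$ and inverting yields $x(t)=x(0)E_\alpha(-\lambda t^\alpha)$, which is exactly \eqref{2.9}.

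The algebra is immediate; the two points needing care are analytic rather than computational. First, the term-by-term transformation must be justified by showing the series converges uniformly (or is dominated) on the relevant half-plane, so that summation and integration may be interchanged; the factorial-type growth of $\Gamma(\alpha k+1)$ makes this estimate straightforward. Second, recovering $x(t)$ from $X(s)$ relies on uniqueness of the inverse Laplace transform, so one should fix at the outset a function class (for instance, continuous functions of at most exponential order) on which the transform is injective. As an independent verification that avoids transform theory, I would substitute the candidate power series $x(t)=\sum_{k\ge0}c_k t^{\alpha k}$ into \eqref{2.8}, differentiate term by term via the power rule in Note (i), and solve the resulting recurrence $c_{k+1}=-\lambda\,\Gamma(\alpha k+1)\,c_k/\Gamma(\alpha(k+1)+1)$ with $c_0=x(0)$; this reproduces the coefficients of $x(0)E_\alpha(-\lambda t^\alpha)$ and simultaneously confirms the initial condition.
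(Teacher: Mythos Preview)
Your Laplace-transform argument for Theorem~\ref{Thm 2.1} is correct and is the standard derivation. Note, however, that the paper does not supply its own proof of this statement: Theorems~\ref{Thm 2.1}--\ref{Thm 2.3} are quoted as preliminary results from the cited references \cite{Luchko,Podlubny}, so there is no in-paper proof against which to compare. Your approach is in fact the one used in those references (and is echoed later in the paper when the authors solve \eqref{2.19}--\eqref{2.23} and \eqref{2.24}--\eqref{2.26} by the same transform method), so it is entirely in keeping with the paper's techniques.
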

\begin{The} \cite{Li and Deng} \label{Thm 2.4}
If $x(t)\in C^m[0,T]$ for $T>0$ and $m-1<\alpha<m, m\in \mathbb{N}$. Then ${}_0^C\mathrm{D}_t^\alpha x(0)=0.$
\end{The}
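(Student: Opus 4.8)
The plan is to work directly from the integral definition of the Caputo derivative in Definition \ref{Def 2.2} and to show that the resulting expression tends to $0$ as $t\to 0^+$; here the symbol ${}_0^C\mathrm{D}_t^\alpha x(0)$ is understood as this one-sided limit, since the derivative is defined only for $t>a=0$. As $m-1<\alpha<m$, the parameter $n$ in the definition equals $m$, so
$${}_0^C\mathrm{D}_t^\alpha x(t)=\frac{1}{\Gamma(m-\alpha)}\int_0^t (t-\tau)^{m-\alpha-1}x^{(m)}(\tau)\,\mathrm{d}\tau,$$
and the exponent satisfies $m-\alpha-1>-1$, which guarantees that the integral converges.

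The key step is a crude absolute-value estimate. Because $x\in C^m[0,T]$, the derivative $x^{(m)}$ is continuous on the compact interval $[0,T]$ and hence bounded; set $M=\sup_{\tau\in[0,T]}|x^{(m)}(\tau)|$. Then for $0<t\le T$,
$$\left|{}_0^C\mathrm{D}_t^\alpha x(t)\right|\le \frac{M}{\Gamma(m-\alpha)}\int_0^t (t-\tau)^{m-\alpha-1}\,\mathrm{d}\tau.$$

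Next I would evaluate the elementary integral. Since $m-\alpha>0$, the substitution $u=t-\tau$ gives $\int_0^t(t-\tau)^{m-\alpha-1}\,\mathrm{d}\tau=t^{m-\alpha}/(m-\alpha)$, whence
$$\left|{}_0^C\mathrm{D}_t^\alpha x(t)\right|\le \frac{M\,t^{m-\alpha}}{\Gamma(m-\alpha+1)}.$$
Letting $t\to 0^+$ and using $m-\alpha>0$ forces $t^{m-\alpha}\to 0$, so the squeeze principle yields ${}_0^C\mathrm{D}_t^\alpha x(0)=0$.

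I do not anticipate a genuine obstacle: the argument is a one-line majorization once the definition is unpacked. The only points requiring care are conceptual rather than technical — first, recognizing that ``evaluation at $0$'' means the right-hand limit of the convolution integral (the bare integral over $[0,0]$ is vacuously zero); and second, verifying that the integrand $(t-\tau)^{m-\alpha-1}$ is integrable near $\tau=t$, which holds precisely because $\alpha<m$ makes the singularity exponent exceed $-1$. The strict inequality $\alpha<m$ is thus essential: at $\alpha=m$ the Caputo derivative reduces to the ordinary $m$-th derivative and the conclusion need not hold.
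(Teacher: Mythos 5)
Your proof is correct: the majorization $\left|{}_0^C\mathrm{D}_t^\alpha x(t)\right|\le M\,t^{m-\alpha}/\Gamma(m-\alpha+1)$ with $M=\sup_{[0,T]}|x^{(m)}|$ and $m-\alpha>0$ is exactly the standard argument, and your reading of ${}_0^C\mathrm{D}_t^\alpha x(0)$ as the right-hand limit is the intended one. The paper itself gives no proof of this statement (it is imported from Li and Deng), and the argument in that reference is essentially the one you wrote.
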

\begin{The}\cite{Li and Deng} \label{Thm 2.5}
If $x(t)\in C^1[0,T]$ for some $T>0$, then $${}_0^C\mathrm{D}_t^{\alpha_2} {}_0^C\mathrm{D}_t^{\alpha_1} x(t)={}_0^C\mathrm{D}_t^{\alpha_1}{}_0^C\mathrm{D}_t^{\alpha_2} x(t)={}_0^C\mathrm{D}_t^{\alpha_1+\alpha_2}x(t), \quad t\in [0,T].$$
where $\alpha_1, \alpha_2 \in \mathbb{R}^+$ and $\alpha_1+\alpha_2\le1$.
\end{The}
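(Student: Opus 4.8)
The plan is to pass to the Laplace domain and exploit Theorem~\ref{Thm 2.4}, which appears to be included immediately before the statement precisely to eliminate the boundary term that obstructs the composition. First I would record that the hypothesis $\alpha_1,\alpha_2>0$ together with $\alpha_1+\alpha_2\le 1$ forces $\alpha_1,\alpha_2\in(0,1)$ and $\alpha_1+\alpha_2\in(0,1]$, so that every Caputo derivative in the statement has $n=1$ in the transform rule \eqref{2.7}.

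Next, I would set $y(t)={}_0^C\mathrm{D}_t^{\alpha_1}x(t)$ and write $X(s)=\mathcal{L}\{x;s\}$. Applying \eqref{2.7} with $n=1$ gives $\mathcal{L}\{y;s\}=s^{\alpha_1}X(s)-s^{\alpha_1-1}x(0)$. Applying \eqref{2.7} a second time to ${}_0^C\mathrm{D}_t^{\alpha_2}y$ produces $s^{\alpha_2}\mathcal{L}\{y;s\}-s^{\alpha_2-1}y(0)$, and here is the crux: Theorem~\ref{Thm 2.4} gives $y(0)={}_0^C\mathrm{D}_t^{\alpha_1}x(0)=0$, so the extra boundary term vanishes. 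Substituting,
\[
\mathcal{L}\{{}_0^C\mathrm{D}_t^{\alpha_2}{}_0^C\mathrm{D}_t^{\alpha_1}x;s\}=s^{\alpha_2}\bigl(s^{\alpha_1}X(s)-s^{\alpha_1-1}x(0)\bigr)=s^{\alpha_1+\alpha_2}X(s)-s^{\alpha_1+\alpha_2-1}x(0),
\]
which is exactly $\mathcal{L}\{{}_0^C\mathrm{D}_t^{\alpha_1+\alpha_2}x;s\}$ from \eqref{2.7} (again with $n=1$, using $\alpha_1+\alpha_2\le 1$). Inverting the transform yields ${}_0^C\mathrm{D}_t^{\alpha_2}{}_0^C\mathrm{D}_t^{\alpha_1}x(t)={}_0^C\mathrm{D}_t^{\alpha_1+\alpha_2}x(t)$. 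Because the computation is symmetric in $\alpha_1$ and $\alpha_2$, swapping their roles gives ${}_0^C\mathrm{D}_t^{\alpha_1}{}_0^C\mathrm{D}_t^{\alpha_2}x(t)={}_0^C\mathrm{D}_t^{\alpha_1+\alpha_2}x(t)$ as well, establishing all three equalities at once. It is worth noting where the bound $\alpha_1+\alpha_2\le 1$ enters: if the sum exceeded $1$ the right-hand transform would carry an extra term $-s^{\alpha_1+\alpha_2-2}x'(0)$ with $n=2$, and the identity would fail, which is the quantitative reason the restriction is imposed.

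The main obstacle I anticipate is not the algebra but the rigor of the transform argument on a finite interval: \eqref{2.7} and uniqueness of the inverse transform are naturally stated on $[0,\infty)$, whereas the claim lives on $[0,T]$, so one must either extend $x$ to a $C^1$ function on $[0,\infty)$ with a well-defined transform, or replace the transform step by a direct convolution argument. Relatedly, one should check that $y={}_0^C\mathrm{D}_t^{\alpha_1}x$ is regular enough for its own Caputo derivative and transform to exist; writing $y={}_0\mathrm{I}_t^{1-\alpha_1}x'$ and using that the Riemann--Liouville integral of a continuous function is continuous (indeed H\"older continuous) handles this point.

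As a transform-free alternative I would instead use the representation ${}_0^C\mathrm{D}_t^{\alpha_1}x={}_0\mathrm{I}_t^{1-\alpha_1}x'$, valid for $\alpha_1\in(0,1)$ and $x\in C^1$, apply ${}_0^C\mathrm{D}_t^{\alpha_2}$, and invoke property (ii): since $\alpha_2\le 1-\alpha_1$, the case $\beta\ge\alpha$ applies with $f=x'$ and $\beta=1-\alpha_1$, returning ${}_0\mathrm{I}_t^{1-(\alpha_1+\alpha_2)}x'$ (or $x'$ when $\alpha_1+\alpha_2=1$), which is precisely ${}_0^C\mathrm{D}_t^{\alpha_1+\alpha_2}x$, with Theorem~\ref{Thm 2.2} underwriting the semigroup step. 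This route avoids the domain issues but shifts the burden onto verifying the hypotheses of property (ii) for the intermediate fractional integral.
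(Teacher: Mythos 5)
The paper itself gives no proof of this statement: Theorem~\ref{Thm 2.5} is quoted from \cite{Li and Deng} as a preliminary, so there is nothing internal to compare against. Judged on its own, your argument is essentially correct and is in fact the standard one. Your second, transform-free route is the cleaner of the two and is closest to the proof in \cite{Li and Deng}: write ${}_0^C\mathrm{D}_t^{\alpha_1}x={}_0\mathrm{I}_t^{1-\alpha_1}x'$, use $\alpha_2\le 1-\alpha_1$ to land in the $\beta\ge\alpha$ branch of property (ii), and read off ${}_0\mathrm{I}_t^{1-\alpha_1-\alpha_2}x'={}_0^C\mathrm{D}_t^{\alpha_1+\alpha_2}x$, with Theorem~\ref{Thm 2.2} supplying the semigroup step; symmetry in $\alpha_1,\alpha_2$ then gives all three equalities. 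The Laplace route also works and correctly isolates why $\alpha_1+\alpha_2\le 1$ is needed (no $x'(0)$ boundary term appears), and your use of Theorem~\ref{Thm 2.4} to kill $y(0)$ is exactly right; its only costs are the extension of $x$ from $[0,T]$ to $[0,\infty)$ and uniqueness of the inverse transform, which you flag.

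One soft spot: you assert that H\"older continuity of $y={}_0\mathrm{I}_t^{1-\alpha_1}x'$ ``handles'' the existence of ${}_0^C\mathrm{D}_t^{\alpha_2}y$, but continuity of $y$ does not give $y\in C^1$, which is what Definition~\ref{Def 2.2} literally requires before one may apply \eqref{2.7} to $y$. The standard repair (and effectively what \cite{Li and Deng} do) is to use ${}_0^C\mathrm{D}_t^{\alpha_2}g={}_0^{RL}\mathrm{D}_t^{\alpha_2}\bigl(g-g(0)\bigr)$ together with $g(0)=y(0)=0$, reducing everything to
\begin{equation*}
\frac{d}{dt}\,{}_0\mathrm{I}_t^{1-\alpha_2}\,{}_0\mathrm{I}_t^{1-\alpha_1}x'
=\frac{d}{dt}\,{}_0\mathrm{I}_t^{1}\,{}_0\mathrm{I}_t^{1-\alpha_1-\alpha_2}x'
={}_0\mathrm{I}_t^{1-\alpha_1-\alpha_2}x',
\end{equation*}
which needs only continuity of $x'$. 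This issue is partly inherited from the statement itself (the iterated Caputo derivative must be interpreted in this extended sense for the theorem to be meaningful), so it is a point to acknowledge rather than a fatal gap; with that caveat your proposal is sound.
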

\begin{The}\cite{Diethelm} \label{Thm 2.6}
Let $f \in C^k[0,b]$ for some $b>0$ and some $k\in\mathbb{N}$. Moreover let $\alpha, \varepsilon >0$ be such that there exists some $l\in\mathbb{N}$ with $l\le k$ and $\alpha, \alpha+\varepsilon \in [l-1,l]$. Then,
$$
{}_0^C\mathrm{D}_t^{\varepsilon}{}_0^C\mathrm{D}_t^{\alpha} f(t)={}_0^C\mathrm{D}_t^{\alpha+\varepsilon}f(t).
$$
\end{The}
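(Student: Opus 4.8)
The plan is to reduce the left-hand composition to a single Riemann--Liouville integral acting on a classical derivative of $f$, and then to collapse it using the composition rule (ii) recorded in the Preliminaries. First I would extract the numerical consequences of the hypothesis. Since $\alpha,\alpha+\varepsilon\in[l-1,l]$ and $\varepsilon>0$, we have $\alpha\in[l-1,l)$ and $0<\varepsilon\le l-\alpha\le 1$; in particular $\lceil\alpha\rceil=\lceil\alpha+\varepsilon\rceil=l$ in the generic (non-integer) situation, and $\varepsilon$ never exceeds $l-\alpha$. This last inequality is the whole point: it keeps us inside the first two branches of rule (ii) and away from its third branch, which is precisely where the naive splitting would fail.

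Next I would write, for $l-1<\alpha\le l$,
\[
{}_0^C\mathrm{D}_t^\alpha f(t)={}_0\mathrm{I}_t^{\,l-\alpha}f^{(l)}(t),
\]
directly from Definition \ref{Def 2.2}, and set $h:=f^{(l)}$ (continuous because $l\le k$) and $\mu:=l-\alpha\in(0,1]$. Applying ${}_0^C\mathrm{D}_t^{\varepsilon}$ and invoking property (ii) with Caputo-order $\varepsilon$ and integral-order $\mu$, the inequality $\varepsilon\le\mu$ forces one of two cases: either $\mu>\varepsilon$, giving ${}_0^C\mathrm{D}_t^{\varepsilon}{}_0\mathrm{I}_t^{\mu}h={}_0\mathrm{I}_t^{\mu-\varepsilon}h={}_0\mathrm{I}_t^{\,l-\alpha-\varepsilon}f^{(l)}$, or $\mu=\varepsilon$ (i.e.\ $\alpha+\varepsilon=l$), giving ${}_0^C\mathrm{D}_t^{\varepsilon}{}_0\mathrm{I}_t^{\varepsilon}h=h=f^{(l)}$. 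In the first case the result is exactly ${}_0^C\mathrm{D}_t^{\alpha+\varepsilon}f$ read off from Definition \ref{Def 2.2} (since $l-1<\alpha+\varepsilon<l$), and in the second case $f^{(l)}={}_0^C\mathrm{D}_t^{l}f={}_0^C\mathrm{D}_t^{\alpha+\varepsilon}f$; either way the two sides coincide.

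Two loose ends remain. The representation above is the Caputo derivative only for $l-1<\alpha\le l$, so the boundary value $\alpha=l-1$ must be treated on its own: there ${}_0^C\mathrm{D}_t^{l-1}f=f^{(l-1)}$ and ${}_0^C\mathrm{D}_t^{\varepsilon}f^{(l-1)}={}_0\mathrm{I}_t^{1-\varepsilon}f^{(l)}={}_0^C\mathrm{D}_t^{l-1+\varepsilon}f$ by the same definition, with $\varepsilon=1$ reducing to the classical identity. I expect the main obstacle to be technical rather than structural: justifying that the intermediate function ${}_0^C\mathrm{D}_t^\alpha f={}_0\mathrm{I}_t^{\,l-\alpha}f^{(l)}$ is regular enough for ${}_0^C\mathrm{D}_t^{\varepsilon}$ to be formed and for property (ii) to apply under the minimal smoothness $f\in C^k$ with $k\ge l$, together with a careful handling of the integer endpoints. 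Conceptually, the cleanest confirmation of \emph{why} no extra terms survive comes from the Laplace transform (\ref{2.7}): with $g:={}_0^C\mathrm{D}_t^\alpha f$ one has $\mathcal{L}\{{}_0^C\mathrm{D}_t^{\varepsilon}g;s\}=s^{\varepsilon}\mathcal{L}\{g;s\}-s^{\varepsilon-1}g(0)$, and Theorem \ref{Thm 2.4} gives $g(0)={}_0^C\mathrm{D}_t^\alpha f(0)=0$, so the spurious initial term drops out and $s^{\varepsilon}\mathcal{L}\{g;s\}$ collapses exactly onto $\mathcal{L}\{{}_0^C\mathrm{D}_t^{\alpha+\varepsilon}f;s\}$. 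The vanishing of this boundary term, forced by the common-interval hypothesis, is the real reason the split is legitimate here.
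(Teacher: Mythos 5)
The paper does not actually prove this statement: Theorem \ref{Thm 2.6} is quoted from the reference [Diethelm] as a preliminary, so there is no in-paper proof to compare against. Judged on its own terms, your argument is sound and is essentially the standard proof of this composition rule: you correctly extract from the hypothesis that $\alpha\in[l-1,l)$ and $0<\varepsilon\le l-\alpha\le 1$, rewrite ${}_0^C\mathrm{D}_t^{\alpha}f={}_0\mathrm{I}_t^{\,l-\alpha}f^{(l)}$, and observe that the constraint $\varepsilon\le l-\alpha$ keeps the outer Caputo derivative in the first two branches of rule (ii), so no residual term of the form that spoils the general composition can appear; the integer endpoints $\alpha=l-1$ and $\alpha+\varepsilon=l$ are handled separately and correctly. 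The one place to be careful is the regularity you flag yourself: property (ii) is stated in the paper with no hypotheses, but applying ${}_0^C\mathrm{D}_t^{\varepsilon}$ to $g={}_0\mathrm{I}_t^{\,l-\alpha}f^{(l)}$ requires $g'$ to exist at least as an integrable function (it can blow up like $t^{l-\alpha-1}$ at $0$ when $f^{(l)}(0)\ne 0$), so a fully rigorous version would verify the first branch of (ii) directly, e.g.\ via the relation ${}_0^C\mathrm{D}_t^{\varepsilon}g={}_0\mathrm{D}_t^{\varepsilon}g$ when $g(0)=0$ together with the semigroup property of Theorem \ref{Thm 2.2}. Your closing Laplace-transform observation, that $g(0)={}_0^C\mathrm{D}_t^{\alpha}f(0)=0$ (Theorem \ref{Thm 2.4}) is what kills the boundary term, is exactly the right conceptual explanation and is consistent with how the surrounding paper reasons.
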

\section{Results on Laplace transform of Mittag-Lffler function}
\begin{The}\label{Thm 2.7}
The Laplace transform of $f(t)=t^{b-1}E_{(\alpha-\beta,\alpha),b}(-a_1t^{\alpha-\beta},-a_2t^\alpha)$ is given by,
\begin{equation}
\mathcal{L}\left\{t^{b-1}E_{(\alpha-\beta,\alpha),b}(-a_1t^{\alpha-\beta},-a_2t^\alpha);s\right\}=\frac{s^{\alpha-b}}{s^\alpha+a_1s^\beta+a_2} \label{2.10}
\end{equation}
where, $b\ge 1$ and $0<\beta<\alpha$.
\end{The}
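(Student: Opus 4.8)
The plan is to expand the multi-parameter Mittag-Leffler function into its defining double series, transform term-by-term, and then resum into a closed form. First I would substitute $z_1=-a_1t^{\alpha-\beta}$, $z_2=-a_2t^\alpha$ into the definition \eqref{2.5} with parameter vector $(\alpha-\beta,\alpha)$ and subscript $b$, so that after multiplying by $t^{b-1}$ the function becomes
$$
f(t)=\sum_{k=0}^\infty\sum_{\substack{l_1+l_2=k\\ l_1,l_2\ge 0}}\binom{k}{l_1}\frac{(-a_1)^{l_1}(-a_2)^{l_2}\,t^{\,b-1+(\alpha-\beta)l_1+\alpha l_2}}{\Gamma\bigl(b+(\alpha-\beta)l_1+\alpha l_2\bigr)},
$$
where the multinomial coefficient $(k;l_1,l_2)$ reduces to $\binom{k}{l_1}$ for two indices.

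The key observation is that the exponent of $t$ in each summand is exactly one less than the argument of the Gamma function in the denominator. Hence, applying $\mathcal{L}\{t^{c-1};s\}=\Gamma(c)/s^{c}$ with $c=b+(\alpha-\beta)l_1+\alpha l_2$, the Gamma factors cancel term-by-term, leaving
$$
\mathcal{L}\{f(t);s\}=s^{-b}\sum_{k=0}^\infty\sum_{\substack{l_1+l_2=k\\ l_1,l_2\ge 0}}\binom{k}{l_1}\bigl(-a_1 s^{-(\alpha-\beta)}\bigr)^{l_1}\bigl(-a_2 s^{-\alpha}\bigr)^{l_2}.
$$
I would then recognize the inner sum as a binomial expansion, collapsing it to $\bigl(-a_1 s^{-(\alpha-\beta)}-a_2 s^{-\alpha}\bigr)^{k}$, and sum the resulting geometric series to obtain $\frac{s^{-b}}{1+a_1 s^{-(\alpha-\beta)}+a_2 s^{-\alpha}}$. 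Multiplying numerator and denominator by $s^\alpha$ then yields the claimed expression $\dfrac{s^{\alpha-b}}{s^\alpha+a_1 s^\beta+a_2}$.

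The main obstacle is justifying the two formal interchanges rather than the algebra. The geometric summation converges only when $\bigl|a_1 s^{-(\alpha-\beta)}+a_2 s^{-\alpha}\bigr|<1$, which holds for $\operatorname{Re}(s)$ sufficiently large since $0<\beta<\alpha$ forces both $s^{-(\alpha-\beta)}\to 0$ and $s^{-\alpha}\to 0$; I would first establish the identity on such a right half-plane and then extend it by analytic continuation, both sides being analytic in $s$. For the term-by-term Laplace transform I would exploit the entire-function growth of the Mittag-Leffler kernel to get absolute and locally uniform convergence of the defining series on compact $t$-intervals, which licenses moving $\mathcal{L}$ inside the summation via dominated convergence. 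Once these two interchanges are secured, the remaining steps are elementary.
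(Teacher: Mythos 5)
Your proposal is correct and follows essentially the same route as the paper's proof: term-by-term Laplace transform of the defining double series, cancellation of the Gamma factors, collapse of the inner sum by the binomial theorem, and summation of the resulting geometric series. The only difference is that you explicitly flag the justification of the two interchanges (term-by-term transform and convergence of the geometric series on a right half-plane, followed by analytic continuation), which the paper carries out purely formally.
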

\begin{proof}
\begin{equation*}
\begin{split}
\mathcal{L}\left\{t^{b-1}E_{(\alpha-\beta,\alpha),b}(-a_1t^{\alpha-\beta},-a_2t^\alpha);s\right\}& = \int_0^\infty e^{-st}\left(\sum_{k=0}^\infty\sum_{l_1+l_2=k}\frac{k!}{l_1!l_2!} \frac{(-a_1)^{l_1}(-a_2)^{l_2}t^{(\alpha-\beta)l_1+\alpha l_2+b-1}}{\Gamma(b+(\alpha-\beta)l_1+\alpha l_2)}\right) \\
& = \sum_{k=0}^\infty\sum_{l_1+l_2=k}\frac{k!}{l_1!l_2!}\frac{(-a_1)^{l_1}(-a_2)^{l_2}}{S^{b+(\alpha-\beta)l_1+\alpha l_2}}\\
& = s^{-b}\sum_{k=0}^\infty\sum_{l_1=0}^k \binom{k}{l_1}(-a_1s^{-\alpha+\beta})^{l_1}(-a_2s^{-\alpha})^{k-l_1}\\
& = s^{-b}\sum_{k=0}^\infty(-1)^k(a_1s^{-\alpha+\beta}+a_2s^{-\alpha})^k\\
& = \frac{s^{\alpha-b}}{s^\alpha+a_1s^\beta+a_2}.
\end{split}
\end{equation*}
\end{proof}
\begin{remark}
(i)\,If $0<\alpha_3<\alpha_2<\alpha_1$ and $b\ge 1$ then $$\mathcal{L}\left\{t^{b-1}E_{(\alpha_1-\alpha_2,\alpha_1-\alpha_3,\alpha_1),b}(-a_1t^{\alpha_1-\alpha_2},-a_2t^{\alpha_1-\alpha_3},-a_3t^{\alpha_1});s\right\}=\frac{s^{\alpha_1-b}}{s^{\alpha_1}+a_1s^{\alpha_2}+a_2s^{\alpha_3}+a_3}.$$\\[0.1cm]
(ii)\,If $0<\alpha_4<\alpha_3<\alpha_2<\alpha_1$ and $b\ge 1$ then 
\begin{multline*}
\mathcal{L}\left\{t^{b-1}E_{(\alpha_1-\alpha_2,\alpha_1-\alpha_3,\alpha_1-\alpha_4,\alpha_1),b}(-a_1t^{\alpha_1-\alpha_2},-a_2t^{\alpha_1-\alpha_3}, -a_3t^{\alpha_1-\alpha_4},-a_4t^{\alpha_1});s\right\}=\\
\frac{s^{\alpha_1-b}}{s^{\alpha_1}+a_1s^{\alpha_2}+a_2s^{\alpha_3}+a_3s^{\alpha_4}+a_4}.
\end{multline*}
\end{remark}
\section{Can we use composition rule while analyzing fractional differential equations?}
This section deals with the results regarding the composition of fractional derivatives.
\begin{The} \label{Thm 2.8}
The solution of 
\begin{equation}
{}_0^C\mathrm{D}_t^\alpha x(t)=\lambda x(t), \quad 0<\alpha<1 \label{2.11}
\end{equation}
is not $C^1$ at $t=0$, where $\lambda\in\mathbb{R}$, $\lambda\ne0$.
\end{The}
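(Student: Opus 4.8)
The plan is to solve \eqref{2.11} explicitly and then inspect the behaviour of the first derivative of the solution as $t\to 0^+$. Writing \eqref{2.11} in the form ${}_0^C\mathrm{D}_t^\alpha x(t)+(-\lambda)x(t)=0$ and invoking Theorem \ref{Thm 2.1} with $\lambda$ replaced by $-\lambda$, the solution is
\[
x(t)=x(0)\,E_\alpha(\lambda t^\alpha)=x(0)\sum_{k=0}^\infty \frac{\lambda^k t^{\alpha k}}{\Gamma(\alpha k+1)}.
\]
We may assume $x(0)\neq 0$, since for $x(0)=0$ we get $x\equiv 0$, which is trivially smooth and makes the statement vacuous; thus the claim concerns the nontrivial solutions.

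Next I would differentiate this series. The $k=0$ term is constant, and the $k=1$ term contributes $\frac{\lambda\alpha}{\Gamma(\alpha+1)}t^{\alpha-1}=\frac{\lambda}{\Gamma(\alpha)}t^{\alpha-1}$, while every term with $k\ge 2$ has the form $c_k t^{\alpha k-1}$ with exponent $\alpha k-1\ge 2\alpha-1>\alpha-1$. Since $0<\alpha<1$ gives $\alpha-1\in(-1,0)$, the factor $t^{\alpha-1}$ diverges as $t\to 0^+$ and dominates all higher terms. Dividing by $t^{\alpha-1}$ one finds $x'(t)\sim \frac{\lambda x(0)}{\Gamma(\alpha)}t^{\alpha-1}$ as $t\to 0^+$, so $x'(t)\to\pm\infty$ because $\lambda x(0)\neq 0$. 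A function that is $C^1$ on $[0,T]$ must have a finite, continuous derivative at the endpoint $t=0$; hence $x$ cannot be $C^1$ at the origin.

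The step I expect to require the most care is the justification of the term-by-term differentiation of the Mittag-Leffler series on $(0,T]$ and the assertion that the $k\ge 2$ terms cannot cancel the leading $t^{\alpha-1}$ singularity. This is handled by noting that $E_\alpha$ is entire, so the differentiated series converges uniformly on compact subsets of $(0,\infty)$, and by the asymptotic estimate above: after factoring out $t^{\alpha-1}$, the sum of the terms with $k\ge 2$ tends to $0$ as $t\to 0^+$ while the $k=1$ contribution tends to the nonzero constant $\lambda x(0)/\Gamma(\alpha)$, so no cancellation of the singular part is possible.

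As a cleaner alternative, one can argue by contradiction using Theorem \ref{Thm 2.4}. Suppose $x\in C^1[0,T]$. Since $0<\alpha<1$ corresponds to $m=1$, that theorem yields ${}_0^C\mathrm{D}_t^\alpha x(0)=0$. On the other hand, evaluating \eqref{2.11} at $t=0$ gives ${}_0^C\mathrm{D}_t^\alpha x(0)=\lambda x(0)$, whence $\lambda x(0)=0$. As $\lambda\neq 0$, this forces $x(0)=0$ and therefore $x\equiv 0$ by the solution formula, contradicting $x(0)\neq 0$. Consequently no nontrivial solution of \eqref{2.11} is $C^1$ at $t=0$.
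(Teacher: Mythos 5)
Your main argument is essentially the paper's proof: solve \eqref{2.11} via Theorem \ref{Thm 2.1}, differentiate the Mittag-Leffler series term by term, and observe that the $t^{\alpha-1}$ term from $k=1$ blows up as $t\to 0^+$. You are in fact more careful than the paper on two points --- excluding the trivial solution $x(0)=0$ (which \emph{is} $C^1$, so the statement implicitly concerns nontrivial solutions) and verifying that the $k\ge 2$ terms, whose exponents $\alpha k-1$ all exceed $\alpha-1$, cannot cancel the leading singularity; the paper simply asserts discontinuity of the derivative from the presence of the $t^{\alpha-1}$ term. Your alternative argument via Theorem \ref{Thm 2.4} (if $x\in C^1$ then ${}_0^C\mathrm{D}_t^\alpha x(0)=0$, forcing $\lambda x(0)=0$ and hence $x\equiv 0$) is a genuinely different and cleaner route that the paper does not take: it avoids series manipulation entirely and uses the explicit solution only to rule out nontrivial $C^1$ solutions, at the cost of invoking a cited regularity result rather than exhibiting the singularity directly.
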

\begin{proof}
Using Theorem \ref{Thm 2.1} the solution of equation (\ref{2.11}) is given by 
\begin{equation}
x(t)=x(0)E_\alpha(\lambda t^\alpha). \label{2.12}
\end{equation} 
By using definition \ref{Def 2.3}, we have 
\begin{equation*}
\begin{split}
E_\alpha(\lambda t^\alpha) & = \sum_{k=0}^\infty \frac{(\lambda t^\alpha)^k}{\Gamma(\alpha k+1)}\\
& =  1+\frac{\lambda t^\alpha}{\Gamma(\alpha +1)}+\frac{\lambda^2 t^{2\alpha}}{\Gamma(2\alpha +1)}+\cdots.
\end{split}
\end{equation*}
\begin{equation*}
\therefore \frac{d}{dt}E_\alpha(\lambda t^\alpha)=\frac{\lambda t^{\alpha-1}}{\Gamma(\alpha )}+\frac{\lambda^2 t^{2\alpha-1}}{\Gamma(2\alpha )}+\frac{\lambda^3 t^{3\alpha-1}}{\Gamma(3\alpha )}+\cdots.
\end{equation*}
Since $\alpha-1<0$, \, $t^{\alpha-1}$ is not continuous at $t=0$.\\
$\Rightarrow$ $\frac{d}{dt}E_\alpha(\lambda t^\alpha)$ is not continuous in $[0,T]$ for any $T>0$.\\
$\Rightarrow$ $ E_\alpha(\lambda t^\alpha)\notin C^1[0,T]$.\\
i.e. The solution $x(t)$ given by equation (\ref{2.12}) is not $C^1$ at $t=0$.
\end{proof}
{\bf Note:}Theorem \ref{Thm 2.8} shows that the solution of FDE (\ref{2.11}) does not satisfy the condition in Theorem \ref{Thm 2.5}.
Further, the following example shows that  \,  ${}_0^C\mathrm{D}_t^{\alpha_1} {}_0^C\mathrm{D}_t^{\alpha_2}E_\alpha(\lambda t^\alpha)\ne  {}_0^C\mathrm{D}_t^{\alpha_1+\alpha_2}E_\alpha(\lambda t^\alpha)$, in general. 
\begin{Ex}
\begin{equation}
{}_0^C\mathrm{D}_t^{\frac{1}{4}} {}_0^C\mathrm{D}_t^{\frac{1}{4}}E_\frac{1}{4}(\lambda t^\frac{1}{4})= \lambda^2 E_\frac{1}{4}(\lambda t^\frac{1}{4}) \label{2.13}
\end{equation}
and
\begin{equation}
{}_0^C\mathrm{D}_t^{\frac{1}{2}}E_\frac{1}{4}(\lambda t^\frac{1}{4})= \lambda t^{-\frac{1}{4}} E_{\frac{1}{4},\frac{3}{4}}(\lambda t^\frac{1}{4}) \label{2.14}
\end{equation}
From (\ref{2.13}) and (\ref{2.14}) we can see that
\begin{equation*}
{}_0^C\mathrm{D}_t^{\frac{1}{4}} {}_0^C\mathrm{D}_t^{\frac{1}{4}}E_\frac{1}{4}(\lambda t^\frac{1}{4})\ne {}_0^C\mathrm{D}_t^{\frac{1}{2}}E_\frac{1}{4}(\lambda t^\frac{1}{4}).
\end{equation*}
\end{Ex}
Therefore, The natural question is ``Can we split FDE (\ref{2.11}) into a system of lower order FDEs?" We provide the answer to this question in subsequent sections.
\subsection{Composition rule for Mittag-Leffler function}
\begin{The} \label{Thm 2.9}
If $\alpha_1>0$, $\alpha_2>0$ are real numbers satisfying $0<\alpha_1+\alpha_2\le\alpha<1$ then\begin{equation*}
{}_0^C\mathrm{D}_t^{\alpha_1} {}_0^C\mathrm{D}_t^{\alpha_2}E_\alpha(\lambda t^\alpha)={}_0^C\mathrm{D}_t^{\alpha_1+\alpha_2}E_\alpha(\lambda t^\alpha)={}_0^C\mathrm{D}_t^{\alpha_2}{}_0^C\mathrm{D}_t^{\alpha_1} E_\alpha(\lambda t^\alpha)
\end{equation*}
even though $E_\alpha(\lambda t^\alpha)$ is not $C^1$ at $t=0$.
\begin{proof}
\begin{equation*}
\begin{split}
{}_0^C\mathrm{D}_t^{\alpha_1} {}_0^C\mathrm{D}_t^{\alpha_2}E_\alpha(\lambda t^\alpha) & = {}_0^C\mathrm{D}_t^{\alpha_1}\left\{{}_0^C\mathrm{D}_t^{\alpha_2}\sum_{k=0}^\infty \frac{(\lambda t^\alpha)^k}{\Gamma(\alpha k+1)}\right\}\\
& = {}_0^C\mathrm{D}_t^{\alpha_1}\left\{\sum_{k=1}^\infty \frac{\lambda^k t^{\alpha k-\alpha_2}}{\Gamma(\alpha k-\alpha_2+1)}\right\}\\
& = \sum_{k=1}^\infty \frac{\lambda^k t^{\alpha k-\alpha_2-\alpha_1}}{\Gamma(\alpha k-\alpha_2-\alpha_1+1)}\\
& = \sum_{k=0}^\infty \frac{\lambda^{k+1} t^{\alpha k+\alpha-\alpha_2-\alpha_1}}{\Gamma(\alpha k+\alpha-\alpha_2-\alpha_1+1)}\\
& = \lambda t^{\alpha-\alpha_2-\alpha_1}E_{\alpha-\alpha_2-\alpha_1+1}(\lambda t^\alpha). 
\end{split}
\end{equation*}
In a similar manner, we find \, \begin{equation*}
{}_0^C\mathrm{D}_t^{\alpha_2} {}_0^C\mathrm{D}_t^{\alpha_1}E_\alpha(\lambda t^\alpha)=\lambda t^{\alpha-\alpha_2-\alpha_1}E_{\alpha-\alpha_2-\alpha_1+1}(\lambda t^\alpha)
\end{equation*}
and 
\begin{equation*}
\begin{split}
{}_0^C\mathrm{D}_t^{\alpha_1+\alpha_2}E_\alpha(\lambda t^\alpha) & = {}_0^C\mathrm{D}_t^{\alpha_1+\alpha_2}\left\{\sum_{k=0}^\infty \frac{(\lambda t^\alpha)^k}{\Gamma(\alpha k+1)}\right\}\\
& = \sum_{k=1}^\infty \frac{\lambda^k t^{\alpha k-\alpha_1-\alpha_2}}{\Gamma(\alpha k-\alpha_1-\alpha_2+1)}\\
& = \sum_{k=0}^\infty \frac{\lambda^{k+1} t^{\alpha k+\alpha-\alpha_1-\alpha_2}}{\Gamma(\alpha k+\alpha-\alpha_1-\alpha_2+1)}\\
& = \lambda t^{\alpha-\alpha_1-\alpha_2}E_{\alpha-\alpha_1-\alpha_2+1}(\lambda t^\alpha). 
\end{split}
\end{equation*} 
Thus, ${}_0^C\mathrm{D}_t^{\alpha_1} {}_0^C\mathrm{D}_t^{\alpha_2}E_\alpha(\lambda t^\alpha)={}_0^C\mathrm{D}_t^{\alpha_1+\alpha_2}E_\alpha(\lambda t^\alpha) = {}_0^C\mathrm{D}_t^{\alpha_2} {}_0^C\mathrm{D}_t^{\alpha_1} E_\alpha(\lambda t^\alpha)$ \,if \\ $0<\alpha_1+\alpha_2\le\alpha<1$.
\end{proof}
\end{The}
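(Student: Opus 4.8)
The plan is to exploit the entire-function series representation of $E_\alpha(\lambda t^\alpha)$ and to reduce every operator appearing in the statement to a Caputo derivative of a monomial $t^\beta$, for which the closed form recorded in Note~(i) of Section~2 is available. Writing
\[
E_\alpha(\lambda t^\alpha)=\sum_{k=0}^\infty \frac{\lambda^k\,t^{\alpha k}}{\Gamma(\alpha k+1)},
\]
I would differentiate the series term by term, keeping careful track of which monomials are annihilated at each stage.

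First I would apply ${}_0^C\mathrm{D}_t^{\alpha_2}$. Since $\alpha_2<\alpha_1+\alpha_2\le\alpha<1$ we have $0<\alpha_2<1$, so the relevant $n$ equals $1$; hence the $k=0$ (constant) term vanishes, while for $k\ge1$ the exponent $\alpha k$ exceeds $n-1=0$ and the power rule gives $t^{\alpha k}\mapsto \frac{\Gamma(\alpha k+1)}{\Gamma(\alpha k-\alpha_2+1)}\,t^{\alpha k-\alpha_2}$, so the sum now runs from $k=1$. The crucial observation is that the hypothesis $\alpha_1+\alpha_2\le\alpha$ forces the smallest surviving exponent to satisfy $\alpha-\alpha_2\ge\alpha_1>0$, so every exponent $\alpha k-\alpha_2$ is strictly positive. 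Consequently, applying ${}_0^C\mathrm{D}_t^{\alpha_1}$ next (again with $n=1$) annihilates no further term and the power rule applies to each monomial, giving
\[
{}_0^C\mathrm{D}_t^{\alpha_1}{}_0^C\mathrm{D}_t^{\alpha_2}E_\alpha(\lambda t^\alpha)=\sum_{k=1}^\infty \frac{\lambda^k\,t^{\alpha k-\alpha_1-\alpha_2}}{\Gamma(\alpha k-\alpha_1-\alpha_2+1)},
\]
which after the reindexing $k\mapsto k+1$ becomes $\lambda\,t^{\alpha-\alpha_1-\alpha_2}E_{\alpha,\,\alpha-\alpha_1-\alpha_2+1}(\lambda t^\alpha)$. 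The computation for ${}_0^C\mathrm{D}_t^{\alpha_2}{}_0^C\mathrm{D}_t^{\alpha_1}$ is identical up to interchanging the roles of $\alpha_1$ and $\alpha_2$, and the single derivative ${}_0^C\mathrm{D}_t^{\alpha_1+\alpha_2}$ (here $0<\alpha_1+\alpha_2<1$, so once more $n=1$) is obtained directly by the same power rule; each of the three produces the very same series, and comparing them establishes the asserted equalities.

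The step I expect to be the main obstacle is the bookkeeping that justifies \emph{why} no monomial is lost in the intermediate stage: this is exactly where the hypothesis $\alpha_1+\alpha_2\le\alpha$ is indispensable, and it is precisely the condition that fails in the preceding Example, where $\alpha_1+\alpha_2=\tfrac12>\tfrac14=\alpha$ lets the intermediate derivative kill a term that ought to survive, breaking the composition. A secondary technical point I would address is the legitimacy of term-by-term Caputo differentiation: because $E_\alpha(\lambda t^\alpha)$ and its formally differentiated series converge locally uniformly on $[0,T]$, I would invoke this uniform convergence to interchange the fractional integro-differential operator with the infinite sum.
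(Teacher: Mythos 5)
Your proposal is correct and follows essentially the same route as the paper's proof: term-by-term application of the Caputo power rule to the series for $E_\alpha(\lambda t^\alpha)$, with the $k=0$ term dropping out and a reindexing yielding $\lambda t^{\alpha-\alpha_1-\alpha_2}E_{\alpha,\,\alpha-\alpha_1-\alpha_2+1}(\lambda t^\alpha)$ in all three cases. If anything, your write-up is slightly more careful than the paper's, since you make explicit both why the hypothesis $\alpha_1+\alpha_2\le\alpha$ prevents any monomial from being annihilated at the intermediate stage and why the operator may be interchanged with the infinite sum.
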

{\bf Note 1:}\\
  This Theorem \ref{Thm 2.9} shows that the condition given in Theorem \ref{Thm 2.5} is sufficient but not necessary.\\
 
\begin{remark}
If we consider $ \alpha_1+\alpha_2>\alpha$ and $0<\alpha<1$ then above theorem does not holds.
We can verify this from following examples.
\begin{Ex}
We have,
\begin{equation}
{}_0^C\mathrm{D}_t^{\frac{1}{2}} {}_0^C\mathrm{D}_t^{\frac{1}{2}}E_\frac{1}{2}(\lambda t^\frac{1}{2})= \lambda^2 E_\frac{1}{2}(\lambda t^\frac{1}{2}) \label{2.15}
\end{equation}
and
\begin{equation}
\mathrm{D}E_\frac{1}{2}(\lambda t^\frac{1}{2})= \lambda t^{-\frac{1}{2}} E_{\frac{1}{2},\frac{1}{2}}(\lambda t^\frac{1}{2}). \label{2.16}
\end{equation}
\begin{equation*}
\Rightarrow \,{}_0^C\mathrm{D}_t^{\frac{1}{2}} {}_0^C\mathrm{D}_t^{\frac{1}{2}}E_\frac{1}{2}(\lambda t^\frac{1}{2})\ne \mathrm{D}E_\frac{1}{2}(\lambda t^\frac{1}{2}).
\end{equation*}
\end{Ex}
\begin{Ex}
We have,
\begin{equation}
{}_0^C\mathrm{D}_t^{\frac{3}{10}} {}_0^C\mathrm{D}_t^{\frac{3}{10}}E_\frac{3}{10}(\lambda t^\frac{3}{10})= \lambda^2 E_\frac{3}{10}(\lambda t^\frac{3}{10}) \label{2.17}
\end{equation}
and
\begin{equation}
{}_0^C\mathrm{D}_t^{\frac{6}{10}}E_\frac{3}{10}(\lambda t^\frac{3}{10})= \lambda t^{-\frac{3}{10}} E_{\frac{3}{10},\frac{7}{10}}(\lambda t^\frac{3}{10}). \label{2.18}
\end{equation}
\begin{equation*}
\Rightarrow \, {}_0^C\mathrm{D}_t^{\frac{3}{10}} {}_0^C\mathrm{D}_t^{\frac{3}{10}}E_\frac{3}{10}(\lambda t^\frac{3}{10})\ne {}_0^C\mathrm{D}_t^{\frac{6}{10}}E_\frac{3}{10}(\lambda t^\frac{3}{10}).
\end{equation*}
\end{Ex}
\end{remark}
{\bf Note 2:}\\
The condition given in Theorem \ref{Thm 2.4} is sufficient but not necessary.\\
Let, \,
 $x(t)=E_\alpha(\lambda t^\alpha),\,\, where \,\, 0<\alpha<1.$\\
 Let us consider $\alpha_1\in \mathbb{R}$ such that $0<\alpha_1<\alpha<1$.\\ 
 \begin{equation*}
 \begin{split}
\mathrm{Therefore} \quad  {}_0^C\mathrm{D}_t^{\alpha_1}x(t) & = {}_0^C\mathrm{D}_t^{\alpha_1} \left\{\sum_{k=0}^\infty \frac{(\lambda t^\alpha)^k}{\Gamma(\alpha k+1)}\right\}\\
& = \sum_{k=1}^\infty \frac{\lambda^k t^{\alpha k-\alpha_1}}{\Gamma(\alpha k-\alpha_1+1)}.
 \end{split}
 \end{equation*}
 ${}_0^C\mathrm{D}_t^{\alpha_1}x(t)|_{t=0}=0$,\quad 
  ($\because \alpha-\alpha_1>0$).\\
 However, $x(t)\notin C^1[0,T]$ for any $T>0$.

\section{Equivalence between FDE and a system of lower order FDEs obtained by splitting the original equation }
In this section we provide the conditions for the equivalence between higher order FDE and a system of FDEs of lower order obtained by splitting the original one.
\subsection{One term case:}
Consider linear FDE,
\begin{equation}
{}_0^C\mathrm{D}_t^\alpha x(t)=\lambda x(t), \quad 0<\alpha<1. \label{2.19}
\end{equation}
Its general solution is,
\begin{equation}
x(t)=x(0)E_\alpha(\lambda t^\alpha). \label{2.20}
\end{equation}
If $\alpha_1+\alpha_2=\alpha$ and if we could  write 
\begin{equation}
{}_0^C\mathrm{D}_t^{\alpha_2} {}_0^C\mathrm{D}_t^{\alpha_1}x(t)=\lambda x(t), \label{2.21}
\end{equation}
then (\ref{2.19}) is equivalent to a system
\begin{equation}
\begin{split}
{}_0^C\mathrm{D}_t^{\alpha_1}x(t)& =y(t), \\
{}_0^C\mathrm{D}_t^{\alpha_2}y(t)& =\lambda x(t)\\
\mathrm{or} \quad {}_0^C\mathrm{D}_t^{\alpha-\alpha_1}y(t)& =\lambda x(t).
\end{split} \label{2.22}
\end{equation}
 Laplace transform of system (\ref{2.22}) gives
\begin{equation*}
\begin{split}
s^{\alpha_1}X(s)-s^{\alpha_1-1}x(0)& =Y(s)\quad \mathrm{and}\\
s^{\alpha-\alpha_1}Y(s)-s^{\alpha-\alpha_1-1}y(0)& =\lambda X(s).
\end{split}
\end{equation*}
Solving this system for $X(s)$, we get
\begin{equation*}
 X(s) =x(0)\frac{s^{\alpha-1}}{s^\alpha-\lambda}+y(0)\frac{s^{\alpha-\alpha_1-1}}{s^\alpha-\lambda}.
\end{equation*}
 Inverse Laplace transform gives
\begin{equation*}
 x(t) =x(0)E_\alpha(\lambda t^\alpha)+y(0)t^{\alpha_1}E_{\alpha,\alpha_1+1}(\lambda t^\alpha).
\end{equation*}
By Note 2, we have $y(0)={}_0^C\mathrm{D}_t^{\alpha_1}x(0)=0$.
Thus, solution of the system (\ref{2.22}) is,
\begin{equation}
x(t) =x(0)E_\alpha(\lambda t^\alpha). \label{2.23}
\end{equation}
Thus the equation (\ref{2.19}) is equivalent to the system (\ref{2.22}).
\subsection{Two term case:}
Consider linear 2-term FDE,
\begin{equation}
{}_0^C\mathrm{D}_t^\alpha x(t)+a_1{}_0^C\mathrm{D}_t^\beta x(t)+a_2x(t)=0, \quad 0<\beta<1,\,1<\alpha<2. \label{2.24}
\end{equation}
Laplace transform of the equation (\ref{2.24}) is,
\begin{equation}
 X(s) =x(0)\frac{s^{\alpha-1}+a_1s^{\beta-1}}{s^\alpha+a_1s^\beta+a_2}+x'(0)\frac{s^{\alpha-2}}{s^\alpha+a_1s^\beta+a_2}. \label{2.25}
\end{equation}
Using inverse Laplace transform, we obtain the solution of (\ref{2.24}) as below:
\begin{equation}
\begin{split}
 x(t)& =x(0)\left[E_{(\alpha-\beta, \alpha),1}(-a_1t^{\alpha-\beta},-a_2t^\alpha)+a_1t^{\alpha-\beta}E_{(\alpha-\beta, \alpha),\alpha-\beta+1}(-a_1t^{\alpha-\beta},-a_2t^\alpha)\right]\\
& \quad +x'(0)tE_{(\alpha-\beta, \alpha),2}(-a_1t^{\alpha-\beta},-a_2t^\alpha).
\end{split} \label{2.26}
\end{equation} 
Now, consider the system
\begin{equation}
\begin{split}
{}_0^C\mathrm{D}_t^{\beta}x(t)& =y(t), \\
{}_0^C\mathrm{D}_t^{\alpha-\beta}y(t)& =-a_2 x(t)-a_1 y(t)
\end{split} \label{2.27}
\end{equation}
obtained by splitting the FDE (\ref{2.24}).\\
As $1<\alpha\le2$, so we can write $\alpha=1+\delta$, where $0<\delta\le 1$.\\
 We consider following three subcases  and check whether the system (\ref{2.27}) is equivalent to (\ref{2.24}).\\[0.15cm]
{\bf Case (i):}\\
Suppose that  $0<\beta<\delta\le1$.\quad
$\Rightarrow 1<\alpha-\beta<2$.\\
Now, Laplace transform of the system (\ref{2.27}) gives,
\begin{equation}
\begin{split}
X(s) & =x(0)\frac{s^{\alpha-1}+a_1s^{\beta-1}}{s^\alpha+a_1s^\beta+a_2}+y(0)\frac{s^{\alpha-\beta-1}}{s^\alpha+a_1s^\beta+a_2}
\\ & \quad +y'(0)\frac{s^{\alpha-\beta-2}}{s^\alpha+a_1s^\beta+a_2}.
\end{split} \label{2.28}
\end{equation}
If the equation (\ref{2.24}) is equivalent to the system (\ref{2.27}) then their Laplace transforms (\ref{2.25}) and (\ref{2.28}) will match.\\
Comparing (\ref{2.25}) and (\ref{2.28}), we get\\
$$y(0)s^{-1}-y'(0)s^{-2}=x'(0)s^{\beta-2}$$
\begin{equation}
\Rightarrow y(0)s-y'(0)=x'(0)s^\beta.   \label{2.29}
\end{equation}
Since $0<\beta<1$, the equation (\ref{2.29}) cannot be solved for $y(0)$ and $y'(0)$ in terms of $x'(0)$. This shows that the Laplace transforms (\ref{2.25}) and (\ref{2.28}) are different.\\
$\Rightarrow$
In this case equation (\ref{2.24}) is not equivalent to the system (\ref{2.27}).
\\[0.15cm]

{\bf Case (ii):}\\
Suppose that $0<\delta<\beta<1$.\quad
$\Rightarrow 0<\alpha-\beta<1$.\\
Using Laplace transform to the system (\ref{2.27}), we obtain
\begin{equation}
X(s)  =x(0)\left(\frac{s^{\alpha-1}}{s^\alpha+a_1s^\beta+a_2}+a_1\frac{s^{\beta-1}}{s^\alpha+a_1s^\beta+a_2}\right)+y(0)\frac{s^{\alpha-\beta-1}}{s^\alpha+a_1s^\beta+a_2}. \label{2.30}
\end{equation}
 Comparing (\ref{2.25}) and (\ref{2.30}), we get $\frac{x'(0)}{y(0)}=s^{1-\beta}$. This contradiction proves that the equation (\ref{2.24}) is not equivalent to the system (\ref{2.27}) in this case also.\\[0.15cm]
{\bf Case (iii):}\\
Consider $0<\beta=\delta\le1$.\quad
$\Rightarrow \alpha-\beta=1$.\\
The equation (\ref{2.24}) is not equivalent to the system (\ref{2.27}) because ${}_0^{C}\mathrm{D}_t^m \mathrm{D}_t^\alpha\ne {}_0^{C}\mathrm{D}_t^{m+\alpha}, \, m=0,1,\dots$.\\[0.2cm]
Now, we provide a proper way to split FDE (\ref{2.24}).
\subsubsection{Proper way of splitting two term FDE:} Let us split FDE (\ref{2.24}) in the following system of three FDEs. 
\begin{equation}
\begin{split}
{}_0^C\mathrm{D}_t^{\beta}x(t)& =y_1(t),\\
{}_0^C\mathrm{D}_t^{1-\beta}y_1(t)& = y_2(t),\\
{}_0^C\mathrm{D}_t^{\alpha-1}y_1(t)& =-a_2 x(t)-a_1 y_1(t)
\end{split} \label{2.31}
\end{equation}
with initial conditions
$$y_1(0)=0, y_2(0)=x'(0) $$
The Laplace transform of system (\ref{2.31}) gives
\begin{equation*}
\begin{split}
X(s) & =x(0)\frac{s^{\alpha-1}+a_1s^{\beta-1}}{s^\alpha+a_1s^\beta+a_2}+y_1(0)\frac{s^{\alpha-\beta-1}}{s^\alpha+a_1s^\beta+a_2}
\\ & \quad +y_2(0)\frac{s^{\alpha-2}}{s^\alpha+a_1s^\beta+a_2}\\
& = x(0)\frac{s^{\alpha-1}+a_1s^{\beta-1}}{s^\alpha+a_1s^\beta+a_2}+x'(0)\frac{s^{\alpha-2}}{s^\alpha+a_1s^\beta+a_2}.
\end{split} 
\end{equation*}
This expression is same as (\ref{2.25}).\\
Therefore, the FDE (\ref{2.24}) is equivalent to the system (\ref{2.31}).
\subsection{Multi-term case:}
The following Theorem gives the proper way of splitting multi-term FDE into a system of FDEs with lower orders.
\begin{The} \label{Thm 2.10}
	Consider multi-term FDE 
	\begin{equation}
a_0x(t)+a_1{}_0^C\mathrm{D}_t^{\alpha_1}x(t)+a_2{}_0^C\mathrm{D}_t^{\alpha_2}x(t)+\dots+ a_m{}_0^C\mathrm{D}_t^{\alpha_m}x(t)=0, \label{2.32}
	\end{equation}
	where $k-1<\alpha_k\le k$,\quad  k=1,2,\dots,m \,\,subject
	to initial conditions\\ $x^{(i)}(0)=C_i, \quad i=0,1,\dots,m-1$.\\
	If we split equation (\ref{2.32}) in a system of $2m-1$ equations as
	\begin{equation}
	\left.
	\begin{split}
	x(t) & =y_0(t),\\
	{}_0^C\mathrm{D}_t^{\beta_j}y_j(t) & =y_{j+1}(t),\quad j=0,1,2,\dots,2m-3,\\
	{}_0^C\mathrm{D}_t^{\beta_{2m-2}}y_{2m-2}(t) & = \frac{-1}{a_m}[a_0x(t)+a_1y_1(t)+a_2y_3(t)\\
	&\quad+a_3y_5(t)+\dots+a_{m-1}y_{2m-3}(t)],
	\end{split}
	\right\} \label{2.33}
	\end{equation}
	where, 
	\begin{equation}
	\begin{split}
	\beta_{2j} & =\alpha_{j+1}-j, \qquad j=0,1,2,\dots,m-1,\\
	\beta_{2j+1} & =(j+1)-\alpha_{j+1}, \qquad j=0,1,2,\dots,m-2,	
  \end{split} \label{2.34}
  \end{equation}
  and\,\, $0<\beta_k\le 1$, \quad $k=0,1,2,\dots,2m-2$ and
with initial conditions 
\begin{equation}
\begin{split}
y_{2i-1}(0) &= 0, \qquad \qquad \qquad \,\,\, i=1,2,\dots,m-1,\\
y_{2i}(0) &= x^{(i)}(0)=C_i, \qquad i=1,2,\dots,m-1
 \label{2.35}
\end{split} 
\end{equation}
	then the equation (\ref{2.32}) is equivalent to the system (\ref{2.33}).
\end{The}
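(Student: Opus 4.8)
The plan is to mimic the Laplace-transform matching already used in the one- and two-term cases, but now organised around an induction that identifies every intermediate variable $y_j$ with a known derivative of $x$. First I would transform the original equation (\ref{2.32}). Writing $\alpha_0=0$ and applying the Caputo transform rule (\ref{2.7}) term by term (with $n=k$ for the $k$-th term, since $k-1<\alpha_k\le k$), the prescribed data $x^{(i)}(0)=C_i$ collect into
\begin{equation*}
X(s)=\frac{\sum_{k=1}^{m}a_k\sum_{i=0}^{k-1}s^{\alpha_k-i-1}C_i}{\sum_{k=0}^{m}a_k s^{\alpha_k}}.
\end{equation*}
This is the target expression that the split system (\ref{2.33}) must reproduce. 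Here I would tacitly use $x\in C^m[0,T]$, which is what makes ${}_0^C\mathrm{D}_t^{\alpha_m}x$ well defined and lets me invoke Theorem \ref{Thm 2.4}.

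Next I would exploit the telescoping built into the orders (\ref{2.34}). Summing consecutive $\beta$'s gives $\sum_{i=0}^{2j-1}\beta_i=j$ and $\sum_{i=0}^{2j}\beta_i=\alpha_{j+1}$, so iterating the chain ${}_0^C\mathrm{D}_t^{\beta_j}y_j=y_{j+1}$ should promote the effective order from $0$ up to $j$ at even stages and up to $\alpha_{j+1}$ at odd stages. Concretely, transforming the relations in (\ref{2.33}) (each $\beta_j\in(0,1]$, so $n=1$) yields the recursion $Y_{j+1}(s)=s^{\beta_j}Y_j(s)-s^{\beta_j-1}y_j(0)$ with $Y_0=X$, and I would prove by induction on $j$ that
\begin{equation*}
Y_{2j}(s)=s^{j}X(s)-\sum_{i=0}^{j-1}s^{j-1-i}C_i,\qquad
Y_{2j+1}(s)=s^{\alpha_{j+1}}X(s)-\sum_{i=0}^{j}s^{\alpha_{j+1}-1-i}C_i,
\end{equation*}
i.e. $y_{2j}=x^{(j)}$ and $y_{2j+1}={}_0^C\mathrm{D}_t^{\alpha_{j+1}}x$. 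The even-index step uses $y_{2j-1}(0)=0$, which the conditions (\ref{2.35}) assert and which is admissible by Theorem \ref{Thm 2.4} precisely because each $\alpha_i$ with $i\le m-1$ is non-integer (forced by $\beta_{2i-1}=i-\alpha_i>0$); the odd-index step uses $y_{2j}(0)=C_j$ and checks that multiplying by $s^{\beta_{2j}}$ and subtracting $s^{\beta_{2j}-1}C_j$ produces exactly the new boundary term $s^{\alpha_{j+1}-1-j}C_j$.

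Finally I would substitute these identities into the closing equation of (\ref{2.33}). Because $y_{2k-1}={}_0^C\mathrm{D}_t^{\alpha_k}x$, the bracket $a_0x+a_1y_1+a_2y_3+\dots+a_{m-1}y_{2m-3}$ equals $a_0x+\sum_{k=1}^{m-1}a_k\,{}_0^C\mathrm{D}_t^{\alpha_k}x$, while the left side ${}_0^C\mathrm{D}_t^{\beta_{2m-2}}y_{2m-2}$ transforms (using $\beta_{2m-2}=\alpha_m-(m-1)$ and $y_{2m-2}(0)=C_{m-1}$) into $\mathcal{L}\{{}_0^C\mathrm{D}_t^{\alpha_m}x\}$. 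Hence the last relation collapses to the Laplace transform of (\ref{2.32}), and solving it for $X(s)=Y_0(s)$ returns the displayed target fraction; equality of transforms gives equality of solutions, establishing the equivalence. I expect the main obstacle to be the inductive bookkeeping of the initial-condition terms: one must verify that at every alternating stage the exponents and the index ranges of the accumulated $C_i$-terms line up exactly with (\ref{2.7}), and that the non-integrality enforced by $0<\beta_k\le1$ is genuinely what legitimises the intermediate conditions $y_{2i-1}(0)=0$. The order-telescoping is routine once (\ref{2.34}) is summed, but keeping the boundary terms aligned across the fractional/integer alternation is where an error is most likely to slip in.
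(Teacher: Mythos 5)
Your proposal is correct and follows essentially the same route as the paper: both arguments take the Laplace transform of (\ref{2.32}) and of the system (\ref{2.33}), substitute the order relations (\ref{2.34}) and the initial data (\ref{2.35}), and conclude that $Y_0(s)=X(s)$. The only difference is organizational --- the paper writes the transformed system as a matrix equation and solves for $Y_0(s)$ wholesale, whereas you solve the same triangular system by forward substitution, which has the minor added benefit of explicitly identifying $y_{2j}=x^{(j)}$ and $y_{2j+1}={}_0^C\mathrm{D}_t^{\alpha_{j+1}}x$.
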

\begin{proof}
Laplace transform of equation (\ref{2.32}) is given by,
\begin{equation}
X(s)=\frac{1}{a_0+\sum_{i=1}^m a_is^{\alpha_i}}\left\{\sum_{i=0}^{m-1}x^{(i)}(0)\left( \sum_{j=i+1}^m a_js^{\alpha_j-i-1}\right) \right\}. \label{2.36}
\end{equation}
Similarly, the Laplace transform of the system (\ref{2.33}) is
\begin{equation*}
\begin{bmatrix}
s^{\beta_0} & -1 & 0 & 0 & \dots & 0 & 0\\
0 & s^{\beta_1} & -1 & 0 & \dots & 0 & 0\\
0 & 0 & s^{\beta_2} & -1 & \dots & 0 & 0\\
0 & 0 & 0 & s^{\beta_3} & \dots & 0 & 0\\
\hdotsfor{7}\\
 0 & 0 & 0 & 0 & \dots & s^{\beta_{2m-3}} & -1\\
 \frac{a_0}{a_m} & \frac{a_1}{a_m} & 0 & \frac{a_2}{a_m} & \dots & \frac{a_{m-1}}{a_m} & s^{\beta_{2m-2}}
\end{bmatrix}
\begin{bmatrix}
Y_0(s)\\
Y_1(s)\\
Y_2(s)\\
Y_3(s)\\
\vdots\\
Y_{2m-3}(s)\\
Y_{2m-2}(s)\\
\end{bmatrix}
=
\begin{bmatrix}
s^{\beta_0-1}y_0(0)\\
s^{\beta_1-1}y_1(0)\\
s^{\beta_2-1}y_2(0)\\
s^{\beta_3-1}y_3(0)\\
\vdots\\
s^{\beta_{2m-3}-1}y_{2m-3}(0)\\
s^{\beta_{2m-2}-1}y_{2m-2}(o)\\
\end{bmatrix}.
\end{equation*}
\begin{equation}
\begin{split}
\Rightarrow \,Y_0(s) & =\frac{\sum_{i=0}^{m-1}y_{2i}(0)\left( \sum_{j=i+1}^m a_{m-j+i+1}s^{\beta_{2i}+\beta_{2i+1}+\dots+\beta_{2i+2m-2j}-1}\right)}{a_0+\sum_{i=1}^m a_is^{\sum_{k=0}^{2i-2}\beta_k}} \\
& \quad +\frac{\sum_{i=1}^{m-1}y_{2i-1}(0)\left( \sum_{j=i+1}^m a_{m-j+i+1}s^{\beta_{2i-1}+\beta_{2i}+\dots+\beta_{2i+2m-2j}-1}\right)}{a_0+\sum_{i=1}^m a_is^{\sum_{k=0}^{2i-2}\beta_k}}.
\end{split} \label{2.37}
\end{equation}
Substituting (\ref{2.34}) and using initial conditions (\ref{2.35}) in equation (\ref{2.37}) we can conclude that	
 $$Y_0(s)=X(s).$$ 
$\Rightarrow$ The FDE (\ref{2.32}) is equivalent to the system (\ref{2.33}). This completes the proof.
\end{proof}
\begin{remark}
(i) Any system with equations more than 2m-1  and obtained by splitting (\ref{2.33}) is equivalent to (\ref{2.32}).\\
(ii) Splitting equation (\ref{2.32}) in a system containing  equations less than 2m-1 is not allowed.
\end{remark}
The following Theorem \ref{Thm 2.11} illustrates this result for the case of $2m-2$ equations.
\begin{The} \label{Thm 2.11}
If we split the equation (\ref{2.32}) in a system involving $2m-2$ equations as
\begin{equation}
\left.
\begin{split}
x(t) & =y_0(t),\\
{}_0^C\mathrm{D}_t^{\beta_j}y_j(t) & =y_{j+1}(t),\quad j=0,1,2,\dots,2m-4\\
{}_0^C\mathrm{D}_t^{\beta_{2m-3}}y_{2m-3}(t) & = \frac{-1}{a_m}[a_0y_0(t)+a_1y_1(t)+a_2y_3(t)\\
&\quad +a_3y_5(t)+\dots+a_{m-1}y_{2m-3}(t)],
\end{split}
\right\} \label{2.38}
\end{equation}
with
\begin{equation}
\begin{split}
\beta_0& =\alpha_1,\\
\alpha_1+\beta_1+\beta_2& =\alpha_2,\\
\alpha_2+\beta_3+\beta_4& =\alpha_3,\\
\alpha_3+\beta_5+\beta_6& =\alpha_4,\\
& \vdots\\
\alpha_{m-2}+\beta_{2m-5}+\beta_{2m-4}& =\alpha_{m-1},\\
\alpha_{m-1}+\beta_{2m-3}& =\alpha_m .
\end{split} \label{2.39}
\end{equation}
then the equation (\ref{2.32})	is not equivalent to the system (\ref{2.38}).
\end{The}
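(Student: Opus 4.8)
The plan is to run exactly the Laplace-transform comparison used in the two-term case and in the proof of Theorem \ref{Thm 2.10}, but now to exhibit the failure of the match rather than the success. First I would apply the Laplace transform to the $2m-2$ equations of the system (\ref{2.38}), obtaining a $(2m-2)\times(2m-2)$ linear system in $Y_0(s),\dots,Y_{2m-3}(s)$ of the same banded shape as the matrix in the proof of Theorem \ref{Thm 2.10}: lower bidiagonal with diagonal entries $s^{\beta_k}$, superdiagonal entries $-1$, and a bottom ``feedback'' row carrying the coefficients $a_0/a_m, a_1/a_m,\dots,a_{m-1}/a_m$. Solving for $Y_0(s)$ by Cramer's rule then yields an expression of the same type as (\ref{2.37}): a single quotient whose numerator is a linear combination of the initial data $y_0(0),y_1(0),\dots,y_{2m-3}(0)$, each entering through the factor $s^{\beta_k-1}$ produced by the transform of a Caputo derivative.

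Next I would pin down the denominator. Summing the relations (\ref{2.39}) telescopes the partial sums of the $\beta_k$ onto the exponents $\alpha_1,\alpha_2,\dots,\alpha_m$; in particular $\beta_0+\beta_1+\dots+\beta_{2m-3}=\alpha_m$, and each initial segment $\beta_0+\dots+\beta_{2k-2}=\alpha_k$. Hence the determinant of the coefficient matrix, after clearing $a_m$, is precisely $a_0+\sum_{i=1}^m a_i s^{\alpha_i}$ --- the same denominator that appears in (\ref{2.36}). Consequently (\ref{2.32}) is equivalent to the system (\ref{2.38}) if and only if the two numerators coincide as functions of $s$ for every admissible choice of initial data.

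The crux is then a numerator mismatch. The numerator of (\ref{2.36}) is $\sum_{i=0}^{m-1}C_i\sum_{j=i+1}^{m}a_j s^{\alpha_j-i-1}$; in particular the top condition $C_{m-1}$ contributes the single lowest-order term $a_m s^{\alpha_m-m}$. I would track the powers of $s$ that the solved $Y_0(s)$ attaches to each $y_\ell(0)$: they are forced to be partial sums of the $\beta_k$ reduced by $1$, and the robust obstruction comes from the boundary slots, where the betas are pinned by (\ref{2.39}) to $\beta_0=\alpha_1$ and $\beta_{2m-3}=\alpha_m-\alpha_{m-1}$. Because the drop from $2m-1$ to $2m-2$ equations removes exactly the one ``even'' slot that in Theorem \ref{Thm 2.10} carried $C_{m-1}$ and generated $s^{\alpha_m-m}$, this power is absent from the achievable set; the spare initial value of (\ref{2.38}) can only supply a neighbouring power. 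Matching the two numerators therefore forces a monomial identity with unequal exponents, i.e. a relation of the shape $(\text{constant ratio of initial values})=s^{(\text{nonzero exponent})}$, exactly as the two-term Case (ii) produced $x'(0)/y(0)=s^{1-\beta}$. Since a nonzero power of $s$ is not constant, no assignment of the $y_\ell(0)$ can reconcile the numerators for all $s$ (equivalently, for $C_{m-1}\neq 0$ the required term cannot be produced), and therefore (\ref{2.32}) is not equivalent to (\ref{2.38}).

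The main obstacle will be the exponent bookkeeping in the third step: computing the Cramer cofactors for the reduced matrix and verifying that the collection of $s$-powers carried by $y_0(0),\dots,y_{2m-3}(0)$ genuinely omits the power $\alpha_m-m$ (and, more generally, that one required block of powers cannot be realized no matter how the free interior betas are chosen). This is precisely where dropping one equation manifests analytically --- the lost equation removes one independent power-generating slot --- so the difficulty is organizational rather than conceptual, and it is cleanest to isolate the single $C_{m-1}$ term and reduce to the two-term monomial-mismatch argument rather than to compare all blocks simultaneously.
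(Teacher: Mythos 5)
Your proposal follows essentially the same route as the paper's proof: Laplace-transform the $2m-2$ system, use the telescoping of (\ref{2.39}) to reduce the denominator to $a_0+\sum_{i=1}^m a_i s^{\alpha_i}$, and then observe that the achievable powers of $s$ in the numerator of $Y_0(s)$ cannot supply the term $x^{(m-1)}(0)\,a_m s^{\alpha_m-m}$ present in $X(s)$, since the missing slot removed in passing from $2m-1$ to $2m-2$ equations is exactly the one that carried $C_{m-1}$. The paper states this as ``there is no term in (\ref{2.41}) which matches with $x^{(m-1)}(0)a_ms^{\alpha_m-m}$,'' which is the same obstruction you identify.
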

\begin{proof}
The Laplace transform of system (\ref{2.38}) gives the system
\begin{equation*}
\begin{bmatrix}
s^{\beta_0} & -1 & 0 & 0 & \dots & 0 & 0\\
0 & s^{\beta_1} & -1 & 0 & \dots & 0 & 0\\
0 & 0 & s^{\beta_2} & -1 & \dots & 0 & 0\\
0 & 0 & 0 & s^{\beta_3} & \dots & 0 & 0\\
\hdotsfor{7}\\
 0 & 0 & 0 & 0 & \dots & s^{\beta_{2m-4}} & -1\\
 \frac{a_0}{a_m} & \frac{a_1}{a_m} & 0 & \frac{a_2}{a_m} & \dots & 0 & \frac{a_{m-1}}{a_m}+s^{\beta_{2m-3}}
\end{bmatrix}
\begin{bmatrix}
Y_0(s)\\
Y_1(s)\\
Y_2(s)\\
Y_3(s)\\
\vdots\\
Y_{2m-4}(s)\\
Y_{2m-3}(s)\\
\end{bmatrix}
=
\begin{bmatrix}
s^{\beta_0-1}y_0(0)\\
s^{\beta_1-1}y_1(0)\\
s^{\beta_2-1}y_2(0)\\
s^{\beta_3-1}y_3(0)\\
\vdots\\
s^{\beta_{2m-4}-1}y_{2m-4}(0)\\
s^{\beta_{2m-3}-1}y_{2m-3}(o)
\end{bmatrix}.
\end{equation*}
\begin{equation}
\begin{split}
\therefore Y_0(s)& =\frac{\sum_{i=0}^{m-2}y_{2i}(0)\left( \sum_{j=i+1}^{m-1} a_{m-j+i}s^{\sum_{k=2i}^{2(m-j+i)-2} \beta_k-1}+a_ms^{\sum_{k=2i}^{2m-3}\beta_k-1}\right)}{\sum_{j=0}^{m-1}a_js^{\sum_{k=0}^{2j-2}\beta_k}+a_ms^{\sum_{k=0}^{2m-3}\beta_k}}\\
& \quad + \frac{\sum_{i=1}^{m-1}y_{2i-1}(0)\left( \sum_{j=i+1}^{m-1} a_{m-j+i}s^{\sum_{k=2i-1}^{2(m-j+i)-2} \beta_k-1}+a_ms^{\sum_{k=2i-1}^{2m-3}\beta_k-1}\right)}{\sum_{j=0}^{m-1}a_js^{\sum_{k=0}^{2j-2}\beta_k}+a_ms^{\sum_{k=0}^{2m-3}\beta_k}}.
\end{split} \label{2.40}
\end{equation}
Using (\ref{2.39}), we get
\begin{equation}
\begin{split}
Y_0(s) & =\frac{\sum_{i=0}^{m-2}y_{2i}(0)\left( \sum_{j=i+1}^m a_js^{\alpha_j-\alpha_i-\beta_{2i-1}-1}\right)}{a_0+\sum_{i=1}^m a_is^{\alpha_i}} \\
& \quad +\frac{\sum_{i=1}^{m-1}y_{2i-1}(0)\left( \sum_{j=i+1}^m a_js^{\alpha_j-\alpha_i-1}\right)}{a_0+\sum_{i=1}^m a_is^{\alpha_i}}.
\end{split} \label{2.41}
\end{equation} 
If (\ref{2.32}) is equivalent to the system (\ref{2.38}), then 
$X(s)=Y_0(s)$.\\
Comparing equations (\ref{2.36}) and (\ref{2.41}), we get
\begin{eqnarray}
y_{2i-1}(0)= 0, \quad \quad i=1,2,\dots,m-1,\nonumber\\
y_{2i}(0) = x^{(i)}(0) \quad \mathrm{and}\quad \beta_{2i-1} = i-\alpha_i,\quad \quad i=1,2,\dots,m-2.\nonumber
\end{eqnarray}
However, there is no term in equation (\ref{2.41}) which matches with the term $x^{(m-1)}(0)a_ms^{\alpha_m-m}$ appearing in the Laplace transform $X(s)$ given by (\ref{2.36}).\\
Hence (\ref{2.32}) is not equivalent to the system (\ref{2.38}).
\end{proof}
\begin{The}\label{Thm 2.12}
	Consider 
	\begin{equation}
a_0x(t)+a_1{}_0^C\mathrm{D}_t^{\alpha_1}x(t)+a_2{}_0^C\mathrm{D}_t^{\alpha_2}x(t)+\dots a_m{}_0^C\mathrm{D}_t^{\alpha_m}x(t)=0, \label{2.42}
	\end{equation}
	where $p<\alpha_1<\alpha_2<\dots<\alpha_m\le p+1$,\quad  $p \in \mathbb{N}\cup\{0\}$,\\
	 with initial conditions $x^{(i)}(0)=C_i, \qquad i=0,1,2,\dots,p$.\\
	If we split equation (\ref{2.42}) as
	\begin{equation}
	\begin{split}
	{}_0^C\mathrm{D}_t^{\alpha_1}x(t)& =y_1(t),\\
	{}_0^C\mathrm{D}_t^{\alpha_{j+1}-\alpha_j}y_j(t) & =y_{j+1}(t),\quad j=1,2,\dots,m-2,\\
	{}_0^C\mathrm{D}_t^{\alpha_m-\alpha_{m-1}}y_{m-1}(t) & = \frac{-1}{a_m}[a_0x(t)+a_1y_1(t)+a_2y_2(t)\\
	& \quad +a_3y_3(t)+\dots+a_{m-1}y_{m-1}(t)]
	\end{split} \label{2.43}
	\end{equation}
	with initial conditions $y_j(0)=0, \qquad j=1,2,\dots,m-1$,\\
	then the equation (\ref{2.42}) is equivalent to the system (\ref{2.43}).
\end{The}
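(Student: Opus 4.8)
The plan is to prove the equivalence exactly as in Theorems \ref{Thm 2.10} and \ref{Thm 2.11}, namely by applying the Laplace transform to both the original equation (\ref{2.42}) and the split system (\ref{2.43}) and checking that the two resulting expressions for $X(s)$ coincide; since the Laplace transform is injective, matching transforms yield identical solutions. First I would transform (\ref{2.42}). The decisive structural feature here is that \emph{all} orders lie in the single interval $(p,p+1]$, so in formula (\ref{2.7}) every Caputo derivative ${}_0^C\mathrm{D}_t^{\alpha_i}x$ uses $n=p+1$ and therefore carries exactly the same block of initial data $x^{(0)}(0),\dots,x^{(p)}(0)$. Collecting terms and solving for $X(s)$ gives
\begin{equation*}
X(s)=\frac{\sum_{k=0}^{p}x^{(k)}(0)\left(\sum_{i=1}^{m}a_i s^{\alpha_i-k-1}\right)}{a_0+\sum_{i=1}^{m}a_i s^{\alpha_i}},
\end{equation*}
which plays the role of (\ref{2.36}) for this case.

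Next I would transform the system (\ref{2.43}) and chain the equations. Introducing the abbreviation $P(s)=\sum_{k=0}^{p}s^{-k-1}x^{(k)}(0)$, the first equation (of order $\alpha_1\in(p,p+1]$, hence absorbing all $p+1$ initial conditions) transforms to $Y_1(s)=s^{\alpha_1}\bigl(X(s)-P(s)\bigr)$. Every intermediate equation has order $\alpha_{j+1}-\alpha_j\in(0,1)$ with vanishing initial value $y_j(0)=0$, so its transform collapses to $Y_{j+1}(s)=s^{\alpha_{j+1}-\alpha_j}Y_j(s)$ with no boundary term. A one-line induction then yields
\begin{equation*}
Y_j(s)=s^{\alpha_j}\bigl(X(s)-P(s)\bigr),\qquad j=1,2,\dots,m-1,
\end{equation*}
the telescoping $\alpha_1+(\alpha_2-\alpha_1)+\dots+(\alpha_j-\alpha_{j-1})=\alpha_j$ being precisely what makes the exponents add up cleanly.

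Finally I would substitute these into the transform of the last equation (again using $y_{m-1}(0)=0$), so that its left side becomes $s^{\alpha_m}(X-P)$ and its right side $-\tfrac{1}{a_m}\bigl[a_0X+\sum_{j=1}^{m-1}a_j s^{\alpha_j}(X-P)\bigr]$. Multiplying out and factoring gives $(X-P)\sum_{j=1}^{m}a_j s^{\alpha_j}=-a_0X$, which rearranges to the boxed expression for $X(s)$ above, matching the transform of (\ref{2.42}); this proves $X(s)=Y_0(s)$ and hence the equivalence.

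I expect the only genuine subtlety, rather than a hard obstacle, to be the initial-condition bookkeeping. The proof hinges on the fact that the \emph{entire} initial data $C_0,\dots,C_p$ is loaded into the single high-order first equation (because $\alpha_1>p$), while every subsequent equation has sub-unit order and zero initial value and therefore contributes no boundary terms upon transforming; this is exactly the mechanism that let the intermediate relations reduce to pure multiplication by $s^{\alpha_{j+1}-\alpha_j}$. Were any $y_j(0)$ nonzero, spurious terms of the type seen in Theorem \ref{Thm 2.11} would survive and the match would break, so I would state explicitly where $y_j(0)=0$ is invoked at each stage.
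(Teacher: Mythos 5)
Your proposal is correct and follows essentially the same route as the paper: Laplace-transform both (\ref{2.42}) and the system (\ref{2.43}), use $y_j(0)=0$ to kill every boundary term after the first equation, and check that the telescoping exponents $\alpha_1+(\alpha_2-\alpha_1)+\dots$ reproduce the transform of the original equation. The only differences are cosmetic or to your credit --- you eliminate the $Y_j$ sequentially via $Y_j=s^{\alpha_j}\bigl(X(s)-P(s)\bigr)$ instead of writing out the $m\times m$ matrix system, and your inner sum $\sum_{i=1}^{m}a_i s^{\alpha_i-k-1}$ is actually the right one for this theorem's hypotheses (all orders share the same ceiling $p+1$, so every term carries the full block of initial data $x^{(0)}(0),\dots,x^{(p)}(0)$), whereas the paper's (\ref{2.44})--(\ref{2.45}) retain the limits $\sum_{j=i+1}^{m}$ inherited from (\ref{2.36}); since the same limits appear on both sides of the paper's comparison, its conclusion is unaffected.
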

\begin{proof}
The Laplace transform of the equation (\ref{2.42}) is,
\begin{equation}
X(s)=\frac{1}{a_0+\sum_{k=1}^m a_ks^{\alpha_k}}\left\{\sum_{i=0}^px^{(i)}(0)\left( \sum_{j=i+1}^m a_js^{\alpha_j-i-1}\right) \right\}. \label{2.44}
\end{equation}
Similarly, the Laplace transform of the system (\ref{2.43}) is,
\begin{equation*}
\begin{bmatrix}
s^{\alpha_1} & -1 & 0 &  \dots & 0 & 0\\
0 & s^{\alpha_2-\alpha_1} & -1 & \dots & 0 & 0\\
0 & 0 & s^{\alpha_3-\alpha_2} & \dots & 0 & 0\\
\hdotsfor{6}\\
 0 & 0 & 0 &  \dots & s^{\alpha_{m-1}-\alpha_{m-2}} & -1\\
 \frac{a_0}{a_m} & \frac{a_1}{a_m} & \frac{a_2}{a_m}  & \dots & \frac{a_{m-2}}{a_m} & \frac{a_{m-1}}{a_m}s^{\alpha_m-\alpha_{m-1}}
\end{bmatrix}
\begin{bmatrix}
X(s)\\
Y_1(s)\\
Y_2(s)\\
\vdots\\
Y_{m-2}(s)\\
Y_{m-1}(s)\\
\end{bmatrix}
=
\begin{bmatrix}
\sum_{i=0}^ps^{\alpha_1-i-1}x^{(i)}(0)\\
s^{\alpha_2-\alpha_1-1}y_1(0)\\
s^{\alpha_3-\alpha_2-1}y_2(0)\\
\vdots\\
s^{\alpha_{m-1}-\alpha_{m-2}-1}y_{m-2}(0)\\
s^{\alpha_m-\alpha_{m-1}-1}y_{m-1}(o)\\
\end{bmatrix}.
\end{equation*}
Simplifying, we get
\begin{equation}
\begin{split}
 X(s)& =\sum_{i=0}^px^{(i)}(0)\frac{\left( \sum_{j=i+1}^m a_js^{\alpha_j-i-1}\right)}{a_0+\sum_{k=1}^m a_ks^{\alpha_k}} \\
& \quad + \sum_{i=1}^{m-1}y_i(0)\frac{\left( \sum_{j=i+1}^m a_js^{\alpha_j-\alpha_i-1}\right)}{a_0+\sum_{k=1}^m a_ks^{\alpha_k}}.
\end{split}\label{2.45}
\end{equation}
Using initial conditions $y_i(0)=0$, $i=1,2,\dots,m-1$ in (\ref{2.45}), we get equation (\ref{2.44}).
This proves the result.
\section{Conclusion}
In this article, we discussed the conditions for compositions of Caputo fractional derivatives. We have shown that some Mittag-Leffler functions are satisfying these rules even though the conditions in the respective Theorems given in the literature are not satisfied. Further, we proposed the results describing the proper ways to split the linear FDEs into the systems of FDEs involving lower order derivatives. 
\section{Acknowledgment}
S. Bhalekar acknowledges  the Science and Engineering Research Board (SERB), New Delhi, India for the Research Grant (Ref. MTR/2017/000068) under Mathematical Research Impact Centric Support (MATRICS) Scheme. M. Patil acknowledges Department of Science and Technology (DST), New Delhi, India for INSPIRE Fellowship (Code-IF170439).
\end{proof}
    
\end{document}